\newcommand{\R}{\mathbb{R}}
\newcommand{\Z}{\mathbb{Z}}
\newcommand{\N}{\mathbb{N}}
\newcommand{\F}{\mathbb{F}_2}
\newcommand{\HFK}{\mathit{HFK}}
\newcommand{\CFK}{\mathit{CFK}}
\newcommand{\HF}{\mathit{HF}}
\newcommand{\CF}{\mathit{CF}}
\DeclareMathOperator{\Ord}{Ord}
\DeclareMathOperator{\br}{br}
\DeclareMathOperator{\gr}{gr}
\newcommand{\abs}[1]{\lvert#1\rvert}
\newtheoremstyle{theorem}{}{}{\itshape}{}{\bfseries}{.}{ }{} %Thereom style
\newtheoremstyle{definition}{}{}{}{}{\bfseries}{.}{ }{} %Definition style
\theoremstyle{theorem}
\newtheorem{Theorem}{Theorem}[section]
\newtheorem{theorem}[Theorem]{Theorem}
\newtheorem{lemma}[Theorem]{Lemma}
\newtheorem{proposition}[Theorem]{Proposition}
\newtheorem{corollary}[Theorem]{Corollary}
\newtheorem{construction}[Theorem]{Construction}
\newtheorem{question}[Theorem]{Question}
\numberwithin{equation}{section}
\theoremstyle{definition}
\newtheorem{definition}[Theorem]{Definition}
\newtheorem{rem}[Theorem]{Remark}
\numberwithin{equation}{section}
\numberwithin{equation}{section}
\begin{document}

%%%%%%%%%%%%%%%%%%%%%%%%%%%%% Title and authors %%%%%%%%%%%%%%%%%%%%%%%%%%%%%%%%%%%%
\title[Multiplicity of Knot Floer order for cables]{On the multiplicity of Knot Floer order under cabling}
\author{David Suchodoll}
\address{Humboldt-Universit\"at zu Berlin, Rudower Chaussee 25, 12489 Berlin, Germany.}
\email{suchodod@hu-berlin.de}

%%%%%%%%%%%%%%%%%%%%%%%%%%%%% Abstract %%%%%%%%%%%%%%%%%%%%%%%%%%%%%%%%%%%%

\date{\today} % date on first page

\begin{abstract}
The \emph{knot Floer order} $\Ord(K)$ is a knot invariant derived from knot Floer homology that provides bounds on many other invariants, such as the bridge index $\br(K)$ for which $\Ord(K) + 1 \leq \br(K)$. For all $(p,q)$-cables of L-space knots, we show that $\Ord(K) + 1$ is multiplicative in $p$ when $g(K) > 1$, and the same holds for $g(K) = 1$ provided $q > 2p$. We also compute the knot Floer order in the range $q < 2p$, thereby determining $\Ord(K_{p,q})$ in terms of $\Ord(K)$ for all cables of L-space knots. We establish upper bounds under cabling for $\Ord(K)$ and discuss potential applications to a conjecture by Krishna and Morton, proving that the braid index of an L-space cable appears as an exponent in its Alexander polynomial if it does for its companion, provided $\Ord(K)+1$ is multiplicative.
\end{abstract}

\keywords{Knot invariants, knot Floer Homology, L-space knots, torsion order} 

\makeatletter
\@namedef{subjclassname@2020}{%
  \textup{2020} Mathematics Subject Classification}
\makeatother%For 2020

\subjclass[2020]{57K10; 57K14, 57K16, 57K18} % Mathematical subject classification
%57K10  	Knot theory
%57K14  	Knot polynomials
%57K16  	Finite-type and quantum invariants, topological quantum field theories (TQFT)
%57K18  	Homology theories in knot theory (Khovanov, Heegaard-Floer, etc.)

\maketitle

%\tableofcontents

%%%%%%%%%%%%%%%%%%%%%%%%%%%%%%%%%%%%%%%%%% INTRODUCTION %%%%%%%%%%%%%%%%%%%%%%%%%%%%%

\section{Introduction}
The \emph{knot Floer order} $\Ord(K)$ of a knot $K$ in $S^3$ was introduced in~\cite{JuhMilZem2020cobordism} as the torsion order of the $\F[U]$-module $\HFK^-(K)$. It offers new lower bounds for knot invariants that were previously difficult to estimate, such as the \emph{fusion number} for ribbon knots (see e.g.~\cite{KaMa1997Ribbon, HoKaPa2021Ribbon}), \emph{band unlinking number} (see e.g.~\cite{Lic1986bandunlinking, HoYa1990bandunlinking}), \textit{unknotting number} (see e.g.~\cite{AkrEam2020unknotting,Zemke2019Linkcobordism}), and \emph{bridge index} $\br(K)$ (see e.g.~\cite{Schubert1954Bruecken, schultens2001additivitybridgenumberknots}). Among these, the bridge index is particularly relevant to the present work. Juhász, Miller, and Zemke~\cite{JuhMilZem2020cobordism} established that 
\begin{equation}
\label{eq: Ord+1 < br}
    \Ord(K) + 1 \leq \br(K).
\end{equation}
We prove that the lower bound in Equation~\eqref{eq: Ord+1 < br} is multiplicative under cabling of L-space knots. For $p$ and $q$ coprime, the $(p,q)$-cable of a knot $K$, denoted $K_{p,q}$, refers to the satellite knot with companion $K$ and pattern the torus knot $T_{p,q}$. This is a similar result to~\cite[Proposition 1.4]{hom2022unknottingnumbercabling}, which asserts the lower bound $\Ord(K_{p,q})+1 \geq p(\Ord(K) + 1)$. 

\begin{theorem}
\label{thm: Ord_v is multiplicative}
    Let \(K_{p,q}\) be the $(p,q)$-cable of an L-space knot \(K\) in \(S^3\) with Seifert genus $g(K) > 1$. Then 
    \[
    \Ord(K_{p,q}) + 1 = p \cdot (\Ord(K) + 1).
    \]
\end{theorem}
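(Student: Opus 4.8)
The plan is to reduce the statement to a combinatorial identity between the \emph{formal semigroups} of $K$ and $K_{p,q}$. Recall that an L-space knot $L$ carries a formal semigroup $S_L\subseteq\Z_{\ge 0}$, determined by $\sum_{s\in S_L}t^s=t^{g(L)}\Delta_L(t)/(1-t)$, whose complement $G_L:=\Z_{\ge 0}\setminus S_L$ is finite of cardinality $g(L)$, lies in $[1,2g(L)-1]$, is symmetric in the sense $s\in G_L\iff 2g(L)-1-s\notin G_L$, contains $1$ whenever $g(L)\ge 1$ (equivalently, by Ozsv\'ath--Szab\'o, the two top exponents of $\Delta_L$ differ by $1$), and has the feature that $\Ord(L)$ equals the length of the longest run of consecutive integers contained in $G_L$. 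First I would record the arithmetic of the cable: from $\Delta_{K_{p,q}}(t)=\Delta_K(t^p)\,\Delta_{T_{p,q}}(t)$ and the torus-knot identity $t^{g(T_{p,q})}\Delta_{T_{p,q}}(t)/(1-t)=\sum_{w\in\langle p,q\rangle}t^w$ one obtains
\[
\sum_{s\in S_{K_{p,q}}}t^s=\Bigl(\sum_{u\in S_K}t^{pu}\Bigr)\bigl(1+t^q+t^{2q}+\cdots+t^{(p-1)q}\bigr),
\]
and since $\gcd(p,q)=1$ makes the $p$ translates $pS_K+bq$ $(0\le b\le p-1)$ pairwise disjoint, $S_{K_{p,q}}=\bigsqcup_{b=0}^{p-1}(pS_K+bq)$. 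Writing $\beta_r\in\{0,\dots,p-1\}$ for the solution of $\beta_r q\equiv r\pmod p$ (so $\beta_0=0$ and $\beta_r\ge 1$ for $r\ne 0$), this says that an integer $s$ with $s\equiv r\pmod p$ lies in $G_{K_{p,q}}$ if and only if $s<\beta_r q$, or $(s-\beta_r q)/p\in G_K$. Finally, Hedden's (and Hom's) characterisation of L-space cables, applied to the hypothesis that $K_{p,q}$ is an L-space knot, yields $q\ge p(2g(K)-1)$.

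Granting this translation, the upper bound comes cheaply. Let $[a,a+\ell-1]$ be a maximal interval contained in $G_{K_{p,q}}$. If it contains no multiple of $p$, then $\ell\le p-1<p(\Ord(K)+1)$. Otherwise it contains $j\ge 1$ multiples of $p$, say $pu_0,p(u_0+1),\dots,p(u_0+j-1)$; since $pm\in G_{K_{p,q}}\iff m\in G_K$, the integers $u_0,\dots,u_0+j-1$ form a run of consecutive elements of $G_K$, so $j\le\Ord(K)$, and as any $(j+1)p$ consecutive integers contain $j+1$ multiples of $p$, we get $\ell<(j+1)p\le p(\Ord(K)+1)$. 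Either way $\ell\le p(\Ord(K)+1)-1$; as $K_{p,q}$ is an L-space knot, $\Ord(K_{p,q})$ is the length of the longest such interval, so $\Ord(K_{p,q})\le p(\Ord(K)+1)-1$.

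For the matching lower bound I would exhibit an explicit gap run. Put $c:=\Ord(K)$. By the symmetry of $G_K$ together with $1\in G_K$, the run of $G_K$ abutting $2g(K)-1$ has length $1$; since $g(K)>1$, it follows that some longest run of $G_K$ does not end at $2g(K)-1$, so I may pick a run $[e,e+c-1]\subseteq G_K$ with $e+c\le 2g(K)-1$. I then claim that the interval $I=[\,p(e-1)+1,\ p(e+c)-1\,]$, which has length $p(\Ord(K)+1)-1$, is contained in $G_{K_{p,q}}$: its multiples of $p$ are precisely $pe,p(e+1),\dots,p(e+c-1)$, corresponding to $e,\dots,e+c-1\in G_K$, while any non-multiple $s\in I$ satisfies $s\le p(e+c)-1\le p(2g(K)-1)-1<q\le\beta_r q$ with $r=s\bmod p\ne 0$, hence $s\in G_{K_{p,q}}$ by the first criterion. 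This gives $\Ord(K_{p,q})\ge p(\Ord(K)+1)-1$, and together with the upper bound we conclude $\Ord(K_{p,q})+1=p(\Ord(K)+1)$.

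I expect the lower bound to be the crux. The construction of $I$ relies on two points: that the cabling slope is large, $q\ge p(2g(K)-1)$, keeping $I$ inside the ``$s<\beta_r q$'' region; and that one can avoid the run of $G_K$ adjacent to $2g(K)-1$, which is precisely where $g(K)>1$ is used (through the structural fact that the staircase of an L-space knot begins with a step of length $1$). Both points fail for $g(K)=1$, where $G_K=\{1\}=\{2g(K)-1\}$ cannot be avoided, which is presumably why the small-genus case — and, within it, the small-slope range $p<q<2p$ — requires a separate analysis.
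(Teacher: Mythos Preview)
Your argument is correct. The semigroup identity $S_{K_{p,q}}=\bigsqcup_{b=0}^{p-1}(pS_K+bq)$ is the right translation of the cabling formula, and your run-counting for both bounds is sound; in particular the choice of a maximal gap-run $[e,e+c-1]$ with $e+c\le 2g(K)-1$ (using $d_1=1$ via $1\in G_K$ and the symmetry of $G_K$) is exactly what makes the explicit interval $I$ sit below $q$ in the non-multiple residue classes.

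Your route is genuinely different from the paper's. The paper works geometrically with the immersed curve invariant $\widehat{\HF}$ and the Hanselman--Watson cabling construction: it reads $\Ord$ as the maximal number of pegs between consecutive intersections of the staircase curve, then tracks how pegs redistribute when one places $p$ vertically scaled copies staggered by $q$ and slides the pegs onto a single line. You instead stay entirely inside the Alexander-polynomial data via the formal semigroup, where the cabling becomes a clean union of residue classes and the problem reduces to elementary run-length arithmetic. The two proofs share the same structural inputs --- Hom's L-space cabling bound $q\ge p(2g(K)-1)$ (ensuring the $p$ copies, respectively residue classes, do not interfere) and Krcatovich's $d_1=1$ (ensuring the extremal gap is not at the edge) --- but package them differently. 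It is worth noting that the paper explicitly reports having found the Alexander-polynomial approach ``very difficult'' and invites readers to pursue it; your semigroup reformulation shows that the combinatorics are in fact quite tractable once one works with the gap set $G_K$ rather than the signed polynomial directly, and it has the virtue of avoiding the bordered machinery entirely, at the cost of being less visual and perhaps less readily extendable to knots whose $\widehat{\HF}$ is not a single staircase.
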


A knot is called an \emph{L-space knot} if it admits a positive Dehn surgery to an \emph{L-space} -- a 3-manifold with the "simplest" Heegaard Floer homology. L-space knots were introduced in~\cite{OS2005lensespacesurgeries} and have notable properties, e.g. being fibered~\cite{Ni2007fibered}. Among the L-space knots, the trefoil knot \(T_{2,3}\) stands out as the unique genus-one knot\footnote{It is one of two fibered genus-one knots~\cite[5.14 Proposition]{BurZieHeu2014knots}. However, the knot $K4a1$ is not an L-space knot, which can be seen from its Alexander polynomial. }. The analogue of Theorem~\ref{thm: Ord_v is multiplicative} for \(T_{2,3}\) holds under certain conditions.

\begin{theorem}
    \label{thm: mult for T23}
    Let \(K\) denote the trefoil knot \(T_{2,3}\), and let \(K_{p,q}\) be its \((p,q)\)-cable, where \(p\) and \(q\) are coprime integers. 
    \begin{align*}
        \Ord(K_{p,q}) + 1 = \left\{
        \begin{array}{cc}
            p + 1 & q < p \\
            q & p < q < 2p \\
            2p & 2p < q
        \end{array}
        \right.
    \end{align*}
\end{theorem}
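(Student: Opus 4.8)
The plan is to compute the formal semigroup of the L-space knot $K_{p,q} = (T_{2,3})_{p,q}$ and to read off its torsion order from the longest run of consecutive gaps. First I record that $K_{p,q}$ really is an L-space knot for every pair of coprime integers with $2 \le p < q$: since $g(T_{2,3}) = 1$, the criterion for a cable of an L-space knot to again be an L-space knot reads $q/p \ge 2g(T_{2,3}) - 1 = 1$, which holds; the case $p = 1$ is trivial and $q = 2p$ cannot occur for $p \ge 2$ by coprimality, so the two cases of the theorem exhaust all relevant pairs. (This also explains why the range $p < q < 2p$ is invisible in Theorem~\ref{thm: Ord_v is multiplicative}, where $q/p \ge 2g(K) - 1 \ge 3$.) Being an L-space knot, $\HFK^-(K_{p,q})$ is a staircase module, so by the preliminary description of $\Ord$ for L-space knots, $\Ord(K_{p,q})$ equals the largest difference $\alpha_{i-1} - \alpha_i$ between consecutive exponents of the Alexander polynomial $\Delta_{K_{p,q}}(t) = \sum_i (-1)^i t^{\alpha_i}$ with $\alpha_0 > \alpha_1 > \cdots$; equivalently, writing $\sum_{s \in S} t^s = \Delta_{K_{p,q}}^{\mathrm{ns}}(t)/(1-t)$ for the formal semigroup $S \subseteq \Z_{\ge 0}$ and using its symmetry $n \in S \Leftrightarrow 2g - 1 - n \notin S$, this number is the maximal length of a run of consecutive integers disjoint from $S$.

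Next I compute $S$. The satellite formula gives $\Delta_{K_{p,q}}(t) = \Delta_{T_{2,3}}(t^p)\,\Delta_{T_{p,q}}(t)$; combined with $\Delta_{T_{2,3}}^{\mathrm{ns}}(t^p) = t^{2p} - t^p + 1$ and $\Delta_{T_{p,q}}^{\mathrm{ns}}(t)/(1-t) = \sum_{s \in \langle p, q\rangle} t^s$, this yields
\[
\sum_{s \in S} t^s = (t^{2p} - t^p + 1)\sum_{s \in \langle p, q\rangle} t^s .
\]
Expanding the right-hand side and using that $\langle p, q\rangle$ is closed under adding $p$, all coefficients are $0$ or $1$, and a short check identifies the removed elements:
\[
S = \langle p, q\rangle \setminus \{\, p + jq : 0 \le j \le p - 1 \,\}.
\]
Thus the gaps of $S$ are the gaps of the numerical semigroup $\langle p, q\rangle$ together with the $p$ points $p, \, p+q, \, \dots, \, p+(p-1)q$.

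It then remains to show that the longest run of consecutive gaps of $S$ has length exactly $\min(2p, q) - 1$. The upper bound rests on two elementary observations: every positive multiple of $q$ lies in $S$ (it lies in $\langle p, q\rangle$ and is not of the form $p + jq$, as $q > p$), and the only multiple of $p$ missing from $S$ is $p$ itself (among the points $p + jq$ with $0 \le j \le p-1$ only $j = 0$ is a multiple of $p$). Hence any $q$ consecutive positive integers contain a multiple of $q$, and any $2p$ consecutive positive integers contain two multiples of $p$, at most one of which is $p$; in either situation such a block meets $S$, so no run of gaps is longer than $\min(2p, q) - 1$. For the matching lower bound I exhibit the run $\{1, 2, \dots, \min(2p, q) - 1\}$: the integers $1, \dots, p-1$ are gaps of $\langle p, q\rangle$; the integer $p$ is one of the removed points; and any $n$ with $p < n < \min(2p, q)$ satisfies $p < n < 2p$ and $n < q$, so $n$ is not a multiple of $p$ while every element of $\langle p, q\rangle$ below $q$ is, giving $n \notin \langle p, q\rangle$, i.e. $n$ is a gap; finally $\min(2p, q)$ itself lies in $S$, so the run is maximal. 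Therefore $\Ord(K_{p,q}) = \min(2p, q) - 1$, which equals $q - 1$ when $p < q < 2p$ and $2p - 1$ when $2p < q$, giving the theorem.

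The conceptual input is the dictionary in the first step between the torsion order of an L-space knot and gaps of its formal semigroup, which I would take from the preliminaries; after that the argument is elementary. The step that needs the most care is the identification of $S$ in the second paragraph — verifying that the displayed product of power series has $0/1$ coefficients and that the set removed from $\langle p, q\rangle$ is exactly $\{p, p+q, \dots, p+(p-1)q\}$. Once $S$ is pinned down, the only remaining subtlety is juggling the two competing constraints — a block of $q$ consecutive integers versus a block of $2p$ consecutive integers — that are responsible for the two branches $q$ and $2p$ of the formula.
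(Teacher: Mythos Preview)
Your proof is correct and takes a genuinely different route from the paper's. The paper works with the immersed curve invariant $\widehat{\HF}$: for $q>2p$ it observes that the staggered copies of the trefoil curve do not overlap and invokes the peg-counting argument from Theorem~\ref{thm: Ord_v is multiplicative}, while for $p<q<2p$ it directly counts how many pegs fall on one side of the first copy before the second copy contributes a peg on the other side. Your argument bypasses immersed curves entirely: you use the cabling formula for the Alexander polynomial, translate the torsion-order computation into finding the longest gap run in the formal semigroup $S=\langle p,q\rangle\setminus\{p+jq:0\le j\le p-1\}$, and settle the combinatorics with the two observations that all positive multiples of $q$ survive in $S$ and that $p$ is the unique excised multiple of $p$.

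This is notable because the paper explicitly reports having tried and abandoned the Alexander-polynomial route for the general Theorem~\ref{thm: Ord_v is multiplicative}, finding it ``very difficult''; you show that for the trefoil the approach is entirely elementary. What the paper's method buys is uniformity with the $g(K)>1$ case and a geometric explanation for why the transition happens at $q=2p$ (overlap of copies). What your method buys is self-containment: no bordered or immersed-curve machinery, only the staircase description of $\HFK^-$ for L-space knots and the classical identity $\Delta_{T_{p,q}}^{\mathrm{ns}}(t)/(1-t)=\sum_{s\in\langle p,q\rangle}t^s$. The identification of $S$ is the one step worth writing out in full; your justification via $(A+p)\setminus(A+2p)=\{p+jq:0\le j\le p-1\}$ (the shifted Ap\'ery set) is correct but terse.
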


Our results have particular relevance to the calculation of the bridge index and thus to the \emph{BB-Conjecture}~\cite{KriMor2025bbconjecture}. It states that the bridge index $\br(K)$ and the \emph{braid index} $\operatorname{b}(K)$ agree if $K$ is an L-space knot. Both knot invariants are known to be multiplicative under cabling~\cite{williams1992multiplicative}, and 
\begin{equation}
\label{eq: br < braid}
    \br(K) \leq b(K), 
\end{equation}
but both are generally difficult to calculate.

\ifx
Assuming $p \cdot (\Ord(K_{p,q}) + 1) > \Ord(K) + 1$ yields that $p\cdot\br(K) = \br(K_{p,q}) \geq \Ord(K_{p,q}) + 1 > p \cdot ( \Ord(K) + 1)$ thus giving a higher bound on $\br(K)$ then Equation~\eqref{ineq: ord and br}. We prove in Theorem~\ref{thm: Ord_v is multiplicative} that this is not the case. 
\fi

For L-space knots, the following results were proven. Juh\'asz, Miller, and Zemke prove in~\cite[Corollary~5.3]{JuhMilZem2020cobordism} that 
\begin{align}
\label{eq: Ord detects br}
\br(K) = \Ord(K) + 1
\end{align}
for torus knots $T_{p,q}$. Krishna and Morton~\cite[Equation~3.2, p.7]{KriMor2025bbconjecture} prove Equation~\eqref{eq: Ord detects br} for L-space knots which are closures of positive braids that include a full twist, and our data in~\cite{census_knot_paper} provides only 9 possible counterexamples to Equation~\eqref{eq: Ord detects br}. Thus we ask

\begin{question}
\label{qu: Ord+1 = br}
    Let $K$ be an L-space knot in $S^3$. For which $K$ does $\Ord(K) + 1$ equal the bridge index $\br(K)$? 
\end{question}
There are counterexamples to the Equality~\eqref{eq: Ord detects br} for L-space knots coming from $(p,q)$-cables of $T_{2,3}$ where $p < q < 2p$. The $(2,3)$-cable was identified in~\cite{hom2022unknottingnumbercabling}. The bridge index satisfies $\br(K) = p$~\cite{williams1992multiplicative}. However, $\Ord(T_{2,3:p,q}) + 1$ is given by Theorem~\ref{thm: mult for T23}. For closures of positive braids that include a full twist, Himeno~\cite{himeno2025bridgeindexbraidindex} confirms Question~\ref{qu: Ord+1 = br}, even if the knot is not L-space.

In the case of L-space knots, the complex $\CFK(K)$ is determined by the Alexander polynomial $\Delta_K(t)$~\cite{OS2005lensespacesurgeries}. Juh\'asz, Miller, and Zemke~\cite[Lemma~5.1]{JuhMilZem2020cobordism} prove that $\Delta_K(t)$ detects $\Ord(K)$ as the highest difference of neighboring exponents.
\begin{equation}
\label{eq: ord from alex}
    \Ord(K) = \max\{ \alpha_{i-1} - \alpha_i \mid i = 0,\dots, 2n\},
\end{equation}
where $\alpha_i$ are the exponents in $\Delta_K(t)$ in decreasing order. Proving Theorem~\ref{thm: Ord_v is multiplicative} using the known cabling formula for the Alexander polynomial (see e.g.~\cite[Theorem~6.15]{Lickorish1997introduction})
\[
\Delta_{K_{p,q}}(t) = \Delta_K(t^p) \cdot \Delta_{T_{p,q}}(t)
\]
proved to be very difficult, even restricted to the case of L-space cables. It is due to the ease of a new tool -- immersed curves, introduced in~\cite{HanRasWat2022heegaardfortorusbdd, HanRasWat2024borderedviaimmersed} -- that we did not pursue a proof using the Alexander polynomial. The immersed curve invariant $\widehat{\HF}(M)$ consists of curves in a punctured torus and encodes a certain bordered invariant. In the case of knot exteriors $M = S^3 \setminus \operatorname{int}(\nu K)$, it decodes $\HFK^-(K)$ and thus $\Ord(K)$. It behaves nicely under cabling~\cite{HanWat2023cabling} and for L-space knots~\cite[Section 7.5]{HanRasWat2024borderedviaimmersed}. The proof of Theorem~\ref{thm: Ord_v is multiplicative} is an explicit geometric argument using the immersed curves~\cite[Proposition~3.1]{hom2022unknottingnumbercabling}. However, we want to encourage any reader to prove Theorem~\ref{thm: Ord_v is multiplicative} using $\Delta_K(t)$ or possibly extend the structural results on the Alexander polynomial of L-space knots (see e.g.~\cite{Krcatovich2018restriction}), possibly answering Problem~33 in~\cite{HomLipRub201730years}, stating the question of what polynomials occur as Alexander polynomials of L-space knots.
\subsection{Multiplicity for cables of general knots}
Krishna and Morton~\cite{KriMor2025bbconjecture} proved that if the BB-conjecture holds for an L-space knot, it also holds for all of its L-space cables. We prove that the multiplicity condition exhibits a similar pattern.

\begin{theorem}
\label{q: mult for cables}
Let $K$ be a knot in $S^3$ with $\varepsilon(K) \geq 0$. 
Suppose there exists some pair $(p,q)$ such that
\[
\Ord(K_{p,q}) + 1 = p \cdot (\Ord(K) + 1).
\]
Then the same multiplicity formula holds for all $(p,q)$-cables of $K$ with
\[
q > 2p \abs{\tau(K)}
\]
\end{theorem}
Here, $\tau(K)$ denotes the Ozsv\'ath--Szab\'o concordance invariant introduced in~\cite{OS2003fourballgenus} and $\varepsilon(K)$ Hom's concordance invariant introduced in~\cite{Hom2014concordance}. 

To prove Theorem~\ref{q: mult for cables}, we analyze each arc in the immersed curve, strengthening~\cite[Proposition~3.1]{hom2022unknottingnumbercabling} by Proposition~\ref{rem: main advancement}, thus proving the following explicit bounds.

\begin{proposition}
    
\label{thm: explicit bounds}
Let $K$ be a knot in $S^3$ and $K_{p,q}$ its $(p,q)$-cable. Then 
\[
\Ord(K)+1 \geq p^{-1}\Ord(K_{p,q}) \geq \Ord(K) - 1.
\]
\end{proposition}

\subsection{Alexander polynomial of iterated cables of L-space knots}

An application of our results is provided by Conjecture~1.9 of~\cite{KriMor2025bbconjecture}, which addresses hyperbolic L-space knots. It conjectures that for a hyperbolic L-space knot $K$ such that $b(K) = b$ the Alexander polynomial has the form 

\begin{equation}
\label{eq: Alex of L-space det bridge}
    \Delta_K(t) = 1 - t + t^b - \cdots.
\end{equation}

We can prove the following.

\begin{theorem}
\label{thm: L-space Alex bridge}
    Let $K$ be an L-space knot with braid index $b(K) = b$. Let $K_{p,q}$ be the L-space $(p,q)$-cable of $K$ with braid index $b(K_{p,q}) = p\cdot b(K)$. If $\Ord(K_{p,q}) + 1 = p(\Ord(K) + 1)$, then 
    \begin{equation*}
        \Delta_{K_{p,q}}(t) = 1 - t + t^{p\cdot b(K)} - \cdots.
    \end{equation*}
\end{theorem}

Notably, the proof of Theorem~\ref{thm: L-space Alex bridge} relies on the multiplicity $\Ord(K) + 1$ under cabling. As a result, certain cables of $T_{2,3}$ are excluded. This restriction was already observed in~\cite{KriMor2025bbconjecture}, which identifies the $(2,3)$-cable of $T_{2,3}$ as a counterexample to Equation~\eqref{eq: Alex of L-space det bridge}. It is an L-space knot, but it does not satisfy the inequality $2p < q$ from Theorem~\ref{thm: mult for T23}.

This paper is a follow-up to~\cite{census_knot_paper}, which is in preparation. Theorem~\ref{thm: Ord_v is multiplicative} was conjectured based on data obtained by studying the bridge index of the census knots in that ongoing project~\cite{census_knot_paper}. The explicit computations leading to the main theorem and both questions are publicly available in~\cite{GitHub}. Both computational projects rely partly on data from~\cite{knotinfo}.  

The paper is organized as follows. Section~\ref{sec: prereq} reviews the background on knot Floer homology and the immersed curve invariant $\widehat{\HF}(M)$. Section~\ref{sec: proof of theorem} recalls key results from~\cite{HanRasWat2022heegaardfortorusbdd} and proves Theorems~\ref{thm: Ord_v is multiplicative} and~\ref{thm: mult for T23}. Section~\ref{sec: multiplicity in general} extends these results to cables of arbitrary knots.

\subsection*{Acknowledgments}
I thank Klaus Mohnke for opportunities to present my work and for his insightful questions. Special thanks to Marc Kegel for the census knots project, whose data inspired the conjecture leading to the main theorem, and for his careful reading of several drafts. I am grateful to Claudius Zibrowius for suggesting the use of the immersed curve invariant and providing helpful comments, and to Andr\'as Juh\'asz, Maggie Miller, and Ian Zemke for clarifying their results. Thanks to Naageswaran Manikandan and Chun-Sheng Hsueh for spotting errors and proofreading, and to Charles Livingston and Allison H. Moore for sharing extensive data. Finally, I thank Tye Lidman, Jen Hom, and JungHwan Park for correspondence that resolved questions in an earlier draft now appearing as theorems.

\section{Preliminaries}
\label{sec: prereq}

\subsection{Knot Floer homology}
\label{sec:knot Floer homology}
The filtered chain homotopy type of the knot Floer complex $\CFK(K)$ is an invariant of a knot $K$ in $S^3$~\cite[Theorem~3.1]{OS2004knotinvariants}.
The complex is freely generated over $\F[U,V]/(UV)$. The differential counts pseudoholomorphic disks $\phi$, where $n_z(\phi)$ and $n_w(\phi)$ count the disks' intersection with certain submanifolds corresponding to basepoints $z$ and $w$. We denote the homotopy classes of these disks from $x$ to $y$ by $\pi_2(x,y)$. The expected dimension of the moduli space $\mathcal{M}(\phi)$ of pseudoholomorphic representatives of $\phi$ is denoted by $\mu(\phi)$. The differential is given by 
\begin{align*}
\partial x = \sum_{y} \sum_{\substack{\phi \in \pi_2(x,y)\\ \mu(\phi) = 1}} \# \left(\mathcal{M}(\phi)/\R\right) ~ U^{n_w(\phi)}V^{n_z(\phi)}\cdot y.
\end{align*}
The Alexander grading is defined by $\gr_A(x) - \gr_A(y) = n_z(\phi) - n_w(\phi)$ for $\phi \in \pi_2(x,y)$. It is fixed by the quasi-isomorphism to $\CF(S^3) \cong \F[U,V]/(UV)$.

We denote the $V=0$ specialization by $\HFK^-(K)$. It can be decomposed as  
\begin{align}
\label{eq: HFK^- decomposition}
    \HFK^-(K) = \F[U] \oplus \HFK^-_{\operatorname{red}}(K) \quad\text{where}\quad
\HFK^-_{\operatorname{red}}(K) = \bigoplus_{k = 0}^n \F[U]/U^{i_k}.
\end{align}

\begin{definition}[\cite{JuhMilZem2020cobordism}]
    Let $K$ be a knot in $S^3$. Let $\HFK^-_{\operatorname{red}}(K)$ be the torsion submodule of its knot Floer homology. The \emph{knot Floer order} is defined as
    \[
    \Ord(K) = \min\{k \in \N \mid U^k \cdot \HFK^-_{\operatorname{red}}(K) = 0\}.
    \]
\end{definition}

Since the decomposition in~\eqref{eq: HFK^- decomposition} is finite, $\Ord(K)$ is always finite. Details on the composition can be found in~\cite[Proposition~6.1.4, Proposition~7.3.3]{OSS2015gridhomology}. 

\subsection{Immersed curves}
\label{sec:immersed curves}

The \emph{immersed curve invariant} \( \widehat{\HF}(M) \) for a 3-manifold \( M \) with torus boundary was introduced by Hanselman, Rasmussen, and Watson in~\cite{HanRasWat2022heegaardfortorusbdd, HanRasWat2024borderedviaimmersed}. This invariant is defined as a collection  
\[
\boldsymbol{\gamma} = \{\gamma_0, \dots, \gamma_n\}
\]  
of curves \( \gamma_i: S^1 \looparrowright T^2 \setminus \{z\} \), up to regular homotopy, where each curve is an immersion into the punctured torus. In~\cite{HanRasWat2024borderedviaimmersed}, the authors prove that \( \widehat{\HF}(M) \) encodes the same data as a certain type of bordered Floer invariant, the type D structure \( \widehat{\operatorname{CFD}}(M) \) introduced in~\cite{LipOszThu2018bordered}. In the special case where \( M \) is the exterior of a knot \( K\) in \(S^3 \), i.e. \( M = S^3 \setminus \operatorname{int}(\nu K) \), the invariant is equivalent to the knot Floer complex \( \CFK(K) \)~\cite[Theorem 2]{KWZ2023amnemonic}.

Let \( M = S^3 \setminus \operatorname{int}(\nu K)\), where \( \nu K \) denotes a tubular neighborhood of the knot \( K\) in \(S^3 \). The boundary torus is identified with \( \partial M\). We denote by \( \mu \) a meridian and by \( \lambda \) a Seifert longitude of the knot exterior. The basepoint in \( T^2 \) is taken to be the point of transverse intersection \( z = \lambda \cap \mu \). The curve $\gamma_0 \in \boldsymbol{\gamma}$ is the unique component connected to $\{\pm\frac12\} \times \R$. It is a concordance invariant~\cite[Proposition~2]{HanWat2023cabling}. 

Let \( \R^2 \) be the universal cover of the torus, with the lifts of the basepoint \( z \) lying at the lattice points \( \Z^2 + (0, 1/2) \). We lift the set of curves \( \boldsymbol{\gamma} \) to the intermediate covering space \( \overline{T} = \R^2 / \lambda \), where \( \lambda \) is chosen to run in the horizontal direction. View \( \overline{T} \) as the infinite strip \( [-1/2, 1/2] \times \R \), with the identification \( (-1/2, t) \sim (1/2, t), \forall t \in \R \). We refer to lifts of the basepoint \( z \) as \emph{pegs}. The intersections of $\widetilde{\gamma} \cap \{0\}\times \R$ will be homotoped to appear at integer heights. See Figure~\ref{fig:T23 immersed curve}. The lift of $\gamma_0$ encodes the Ozsv\'ath-Szab\'o $\tau$-invariant as its highest intersection with $\mu$ and Hom's $\varepsilon$-invariant by its behavior after the intersection. Turning down, up, or straight corresponds to the values $+1, -1$, or $0$~\cite[Proposition~2]{HanWat2023cabling}.

\begin{figure}[htbp]
    \centering
    \subfigure[$\tau=1$, $\varepsilon = 1$]{%
        \includegraphics[width=0.15\textwidth]{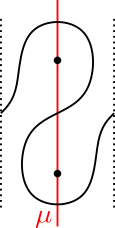}
    }
    \hspace{1cm}
    \subfigure[$\tau=0$, $\varepsilon = 0$]{%
        \includegraphics[width=0.15\textwidth]{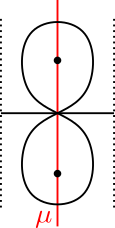}
    }
    \hspace{1cm}
    \subfigure[$\tau=-1$, $\varepsilon = -1$]{%
        \includegraphics[width=0.15\textwidth]{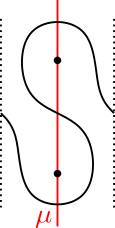}
    }
    \caption{The smoothed immersed curves $\boldsymbol{\gamma}$ for~$(a)$ $T_{2,3}$,~$(b)$ $K4a1$, and~$(c)$ $T_{-2,3}$.}
    \label{fig:T23 immersed curve}
\end{figure}

To obtain $\Ord(K)$ from the immersed curve, we refer to a result by Hom--Lidman--Park~\cite{hom2022unknottingnumbercabling}. Let $(\alpha_i)_{i = 1}^{2n}$ be the arcs obtained by splitting the immersed curve at the transverse intersections with $\mu = \{0\} \times \R$. Their \textit{length} is defined by the number of pegs between the two intersections $\alpha \cap \mu$. An arc is called \textit{right arc} if it is contained in $[0, 1/2) \times \R$.

\begin{lemma}[Lemma~2.7~\cite{hom2022unknottingnumbercabling}]
\label{lem: homlidpa}
    Let $\HFK^-(K) = \F[U] \oplus \F[U]/U^d_i$. Then for each $d_i$, the immersed curve associated to $\HFK^-(K)$ has a right arc of length $d_i$. Conversely, an immersed curve with a right arc of length $d_i$ comes from a complex $\CFK^-(K)$ with an $\F[U]/U^{d_i}$ summand in $\HFK^-$. \qed
\end{lemma}
In particular, $\Ord(K) = \max\{\operatorname{length}(\alpha_i)\mid i = 1, \dots, n\}$.
\begin{rem}
    In general, the complex generated by the intersections $\widetilde{\gamma} \cap \mu$ with grading $\gr_A(x) = \operatorname{height}(x)$. The differential is given by counting bigons bounded by $\mu$ and $\widetilde{\gamma}$ (cf.~\cite[Section~4]{HanRasWat2022heegaardfortorusbdd}).
\end{rem}

To prove our results, we analyze each type of arc using Proposition~3.1 of~\cite{hom2022unknottingnumbercabling}. 
We follow their exposition; by exploiting $\Ord(K) = \Ord(-K)$~\cite[Section~5.1]{OzsSza2006Holomorphictriagles}, and
\(\varepsilon(-K) = -\varepsilon(K)\)~\cite[Proposition~3.6]{Hom2014tauofcables}, we may assume without loss of generality that 
\(\varepsilon(K) \geq 0\). This reduces the problem to the study of right arcs 
(see Section~2 of~\cite{hom2022unknottingnumbercabling} for a concise overview, 
and~\cite{hanselman2023knotfloerhomologyimmersed} for a full exposition of immersed 
curves for knotlike complexes). 

Let $K_{p,q}$ be the $(p,q)$-cable of a knot $K$ in $S^3$. To obtain the immersed curve $\gamma_{p,q}$ for $K_{p,q}$ from the immersed curve $\gamma$ of $K$, we recall the main theorem from~\cite{HanWat2023cabling}.
\begin{construction}[Theorem~1~\cite{HanWat2023cabling}]
\label{prop: cabling process}
    Let $\boldsymbol{\gamma}$ be the immersed multicurve associated with $K$ and $\gamma_{p,q}$ the multicurve associated with the cable $K_{p,q}$. Let $\widetilde{\gamma}$ and $\widetilde{\gamma}_{p,q}$ be their lifts to $\R^2 \setminus \Z^2$. Then $\widetilde{\gamma}_{p,q}$ is the curve obtained by the following process. 
    \begin{enumerate}
    \item Draw $p$ copies of $\widehat{HF}(M)$ next to each other, each scaled vertically by a factor of $p$, staggered in height such that each copy of the curve is $q$ units lower than the previous copy.
    \item Connect the loose ends of the successive copies of the curve.
    \item Translate the pegs horizontally so that they lie in the same vertical line, carrying the curve along with them.
\qed
\end{enumerate}
\end{construction}

Following~\cite{hom2022unknottingnumbercabling}, we say that an arc is \emph{essential} 
if it connects to \(\{\pm \tfrac12\} \times \mathbb{R}\), and \emph{initial} if it 
contains the intersection of \(\mu\) with an essential arc. 
Every noninitial right arc in the immersed curve between the highest and lowest 
intersection points falls into exactly one of four types, illustrated in 
Figure~\ref{fig: eta pm}. We follow the naming scheme of~\cite{HanWat2023cabling}.

\begin{figure}[htbp]
    \centering
    \subfigure[$\eta_n^{--}$]{%
        \includegraphics[height=1.8in]{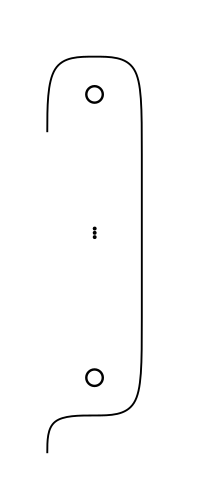}
    }
    \hspace{1cm}
    \subfigure[$\eta_n^{-+}$]{%
        \includegraphics[height=1.8in]{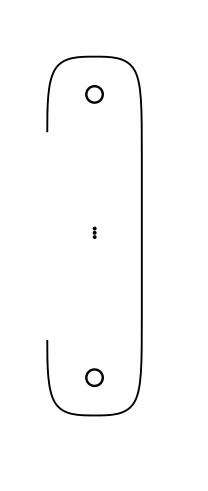}
    }
    \hspace{1cm}
    \subfigure[$\eta_n^{+-}$]{%
        \includegraphics[height=1.8in]{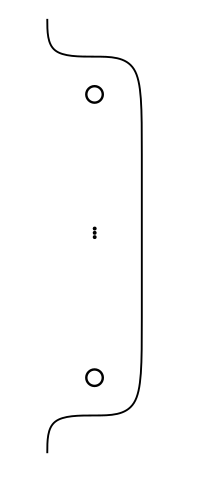}
    }
    \hspace{1cm}
    \subfigure[$\eta_n^{++}$]{%
        \includegraphics[height=1.8in]{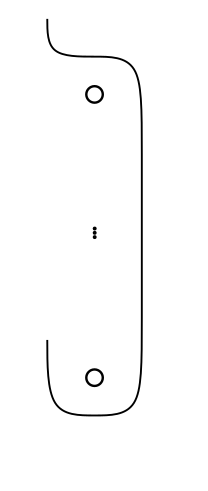}
    }
    \caption{Classification of arcs \(\eta_n^{\pm\pm}\) in the immersed curve. 
The first sign indicates the orientation at the top endpoint, and the second sign the orientation at the bottom endpoint. 
The subscript \(n\) denotes the arc length.}
\label{fig: eta pm}
\end{figure}

\begin{proposition}[{\cite[Proposition~3.1(1)]{hom2022unknottingnumbercabling}}]
\label{prop: noninitial arcs}
Let $\eta^{\pm \pm}_n$ be a noninitial right arc in the immersed curve. 
Under $(p,q)$–cabling, the following hold:
\begin{enumerate}[(1)]
    \item A right arc $\eta^{--}_n$ yields a right arc of length $pn$.
    \item A right arc $\eta^{-+}_n$ yields a right arc of length $pn - p + 1$.
    \item A right arc $\eta^{+-}_n$ yields a right arc of length $pn + p - 1$.
    \item A right arc $\eta^{++}_n$ yields a right arc of length $pn$. 
\qed
\end{enumerate}
\end{proposition}

\begin{rem}
If the arc of maximal length is of type~(1) or~(4), both before and after 
$(p,q)$–cabling, then $\Ord(K)$ is multiplicative. 
If in both instances the arc of maximal length is of type~(2), then $\Ord(K) - 1$ is multiplicative. Type (3) is most important for Theorem~\ref{thm: Ord_v is multiplicative}. If the maximal length arc is of type (3), we get that \[\Ord(K_{p,q}) + 1 = pn + p = p(n + 1) = p(\Ord(K) + 1).\]
\end{rem}

To complete the analysis of $\gamma_{p,q}$, it remains to consider \emph{initial} right arcs. These can appear in any form from Figure~\ref{fig: eta pm}, but they give rise to five additional cases.

\begin{figure}[htbp]
    \centering
    \subfigure[$\eta_n^{00}$]{%
        \includegraphics[height=1.8in]{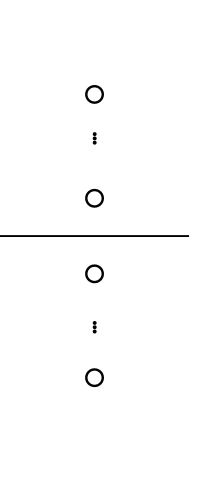}
    }
    \hspace{0.5cm}
    \subfigure[$\eta_n^{-0}$]{%
        \includegraphics[height=1.8in]{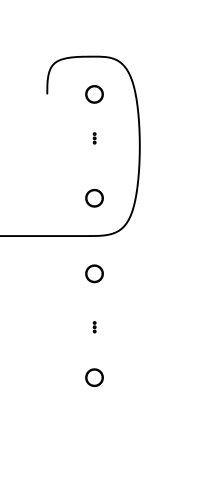}
    }
    \hspace{0.5cm}
    \subfigure[$\eta_n^{+0}$]{%
        \includegraphics[height=1.8in]{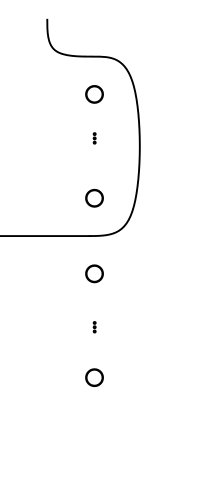}
    }
    \hspace{0.5cm}
    \subfigure[$\eta_n^{0-}$]{%
        \includegraphics[height=1.8in]{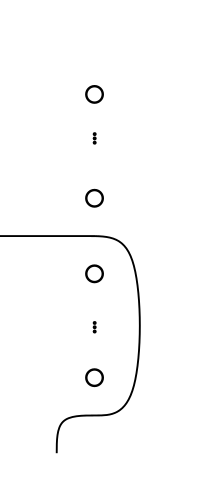}
    }
    \hspace{0.5cm}
    \subfigure[$\eta_n^{0+}$]{%
        \includegraphics[height=1.8in]{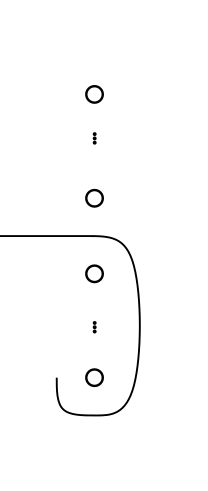}
    }
    \caption{Initial arcs in the immersed curve, denoted by $\eta_n^{0\pm}$ or $\eta_n^{\pm 0}$.
The first superscript sign describes the arc’s behavior at the top, the second at the bottom, with $0$ indicating a connection to the essential arc.
The subscript $n$ records the arc’s length.}
    \label{fig: eta pm0}
\end{figure}

\begin{proposition}[{\cite[Proposition~3.1(2)]{hom2022unknottingnumbercabling}}]
\label{prop: initial arcs}
Suppose $\varepsilon(K) = 1$, and let $\eta^{\pm \pm}_n$ or $\eta^{0 \pm}_n$ be an \emph{initial} right arc. 
Under $(p,q)$–cabling, the following hold:

\begin{enumerate}[(1)]
    \item Right arcs $\eta^{--}_n$, $\eta^{+-}_n$, and $\eta^{0-}_n$ yield right arcs of length
    \begin{itemize}
        \item $pn$ for $q < p(2\tau - 1)$,
        \item $pn + p - 2p\tau + q - 1$ for $p(2\tau - 1) < q < 2p\tau$,
        \item $pn + p - 1$ for $q > 2p\tau$.
    \end{itemize}
    \item Right arcs $\eta^{-+}_n$, $\eta^{++}_n$, and $\eta^{0+}_n$ yield right arcs of length
    \begin{itemize}
        \item $pn - p + 1$ for $q < p(2\tau - 1)$,
        \item $pn - 2p\tau + q$ for $p(2\tau - 1) < q < 2p\tau$,
        \item $pn$ for $q > 2p\tau$.
    \end{itemize}
\end{enumerate}

In each case, the resulting right arc is noninitial, and the second superscript remains unchanged. \qed
\end{proposition}

\begin{proposition}
\label{rem: main advancement}
The collections of right arcs described in Propositions~\ref{prop: noninitial arcs} and~\ref{prop: initial arcs}, while not exhaustive, contain all arcs of maximal length in the cable.
\end{proposition}

\begin{proof}
Let $\boldsymbol{\gamma}$ be the immersed multicurve before cabling with a maximal arc of length $n$. By Proposition~3.1 of~\cite{hom2022unknottingnumbercabling}, the $(p,q)$-cabling produces an arc of the corresponding type and length in the \textit{rightmost} copy of $\gamma_{p,q}$ (cf. Construction~\ref{prop: cabling process})).

It remains to analyze arcs outside the rightmost copy. Since the pegs are spaced at intervals of $p$, this forces any such arc to be broken at a length of at most $p$. On the other hand, Proposition~3.2 of~\cite{hom2022unknottingnumbercabling} ensures the existence of an arc of length $p$ in the rightmost copy. Therefore, no arc outside the rightmost copy can realize maximal length.
\end{proof}

\section{Proof of theorems}
\label{sec: proof of theorem}

\subsection{Multiplicity for cables of L-space knots}

L-space knots have been defined initially, and studied by Ozsváth and Szabó in~\cite{OS2005lensespacesurgeries}. Their Alexander polynomial is alternating with coefficients $\pm 1$ and determines the knot complex completely, as proven in Theorem~1.2 and Corollary~3.1 in~\cite{OS2005lensespacesurgeries}. Most importantly, their immersed curves are monotone in a neighborhood of $\mu$~\cite[Section~7.5]{HanRasWat2024borderedviaimmersed}. This suffices to prove Theorem~\ref{thm: Ord_v is multiplicative}.

\begin{proof}[Proof of Theorem~\ref{thm: Ord_v is multiplicative}]
Since the immersed curve is monotonically decreasing in a neighborhood of $\mu$, all noninitial arcs are of type $\eta_n^{+-}$, while the initial arc is of type $\eta_1^{--}$~\cite{Krcatovich2018restriction}. We can assume that after homotopy, there is no arc of length zero. 

We distinguish two cases for the arc of maximal length:

\begin{enumerate}
    \item \textbf{Noninitial maximal arc:} Suppose the arc of maximal length is noninitial and thus of type $\eta_n^{+-}$. Then, by Proposition~\ref{prop: noninitial arcs}, the corresponding cabled curve $\gamma_{p,q}$ contains an arc of type $\eta^{+-}_{pn + p - 1}$. Therefore,
    \[
    \Ord(K_{p,q}) + 1 \ge pn + p = p(\Ord(K) + 1).
    \]
    
    \item \textbf{Initial maximal arc:} Suppose the maximal arc is initial and thus of type $\eta_1^{--}$. Since $g(K) > 1$, the diagram contains at least two right arcs, including a noninitial arc of type $\eta_1^{+-}$. The noninitial arc has length at least one, reducing this case to the previous one.
\end{enumerate}

Finally, Proposition~\ref{prop: initial arcs} ensures that any initial arc has length at most $2p - 1$, which is no larger than the arcs coming from $\eta_n^{+-}$ arcs, and Proposition~\ref{rem: main advancement} shows that there is no longer arc.
\end{proof}

\begin{proof}[Proof of Theorem~\ref{thm: mult for T23}]
The immersed curve of $T_{2,3}$ contains a single initial arc of type $\eta_1^{--}$. Using $\tau(T_{2,3}) = 1$ and $n = 1$, Proposition~\ref{prop: initial arcs}(1) immediately gives the result, since it yields the existence of arcs of length
\begin{itemize}
        \item $p$ for $q < p$,
        \item $q - 1$ for $p < q < 2p$,
        \item $2p - 1$ for $q > 2p$.
\end{itemize}
By Proposition~\ref{rem: main advancement}, there cannot be a longer arc in the immersed curve of the cable. 
\end{proof}

\begin{rem}
    These proofs are mere applications of the result by Hom--Lidman--Park. The only advancement lies in the realization that any right arc not in the rightmost copy is at most of length $p$, as stated in Proposition~\ref{rem: main advancement}.
\end{rem}

Notably, we can prove Theorem~\ref{q: mult for cables} and Proposition~\ref{thm: explicit bounds} using the same methods. 

\subsection{Multiplicity for general knots}
\label{sec: multiplicity in general}

\begin{proof}[Proof of Theorem~\ref{q: mult for cables}]
Let $K_{p,q}$ be the $(p,q)$-cable of a knot $K$ in $S^3$ such that 
\[
\Ord(K_{p,q}) + 1 = p \cdot (\Ord(K) + 1).
\]
Since $\Ord(K)$ is realized by the maximal length of a right arc in the immersed curve (Lemma~\ref{lem: homlidpa}), the multiplicity of $\Ord(K)+1$ implies that the arc of maximal length $\alpha$ is either noninitial of type $\eta^{+-}$, in which case there is nothing more to prove (cf. proof of Theorem~\ref{thm: Ord_v is multiplicative}), or initial of type $\eta^{--}$, $\eta^{+-}$, or $\eta^{0-}$.  

Initial arcs can only preserve the multiplicity of $\Ord(K)+1$ if $q > 2p\tau$, in which case the initial arc in the rightmost copy is of type $\eta^{+-}$, ensuring that $\Ord(K)+1$ remains multiplicative.  
\end{proof}

\begin{rem}
    In the case that the maximal length arc is noninitial of type $\eta^{+-}$, the proof of Theorem~\ref{q: mult for cables} works without the bounds on $q$. Given one knows the arc is of this type beforehand, $\Ord(K)+1$ is multiplicative for all cables. This is the case if, e.g., there is a pair $(p,q)$ such that $q < 2p|\tau(K)|$, such that $\Ord(K) + 1$ is multiplicative\footnote{The immersed curve of K8a2 has eight components. All but $\gamma_0$ are figure eighths and thus arcs of type $\eta_1^{++}$ and $\eta_1^{--}$. The component $\gamma_0$ intersects $\mu$ at every integer height and thus has an initial arc of type $\eta_1^{--}$ and a noninitial arc of type $\eta_1^{+-}$. Its knot Floer order is one and it is realized by this noninitial arc.}, or if $K$ is an L-space knot (cf. Theorem~\ref{thm: Ord_v is multiplicative}). 
\end{rem}

\begin{corollary}
    Let $K$ be concordant to an L-space knot $J$ such that $\Ord(K) = \Ord(J)$. Then for all $(p,q)$-cables $K_{p,q}$, $\Ord(K) + 1$ is multiplicative in $p$. 
\end{corollary}
\begin{proof}
    Since $K$ is concordant to $J$, the immersed curve $\gamma_0$ of $K$ is equal to the immersed curve of $J$. Thus, it consists of right arcs of type $\eta_n^{+-}$ and an initial arc $\eta_1^{--}$. When $\Ord(K) = \Ord(J)$, then at least one of the maximal length arcs is of type $\eta^{+-}$. 
\end{proof}

\ifx
The cases in the remark can be collected in the following corollary. 
    Let $(\varphi)_{j \in \N}$ be the infinite family of concordance homomorphisms introduced in~\cite{DaiHomStoTru2021concordancxehomom}.

\begin{theorem}
    \[\varphi_j(K) = \#\{\alpha^+_j\} - \#\{\alpha_j^-\},\] where $\alpha^+_j$ is a downwards oriented right arc of length $j$ and $\alpha^-_j$ is an upwards oriented arc of length $j$, where $\alpha$ are the arcs coming from $\gamma_0$.
\end{theorem}
\begin{proof}
    Since we are working in $\F[U,V]/(UV = 0)$, Corollary~6.2 in~\cite{DaiHomStoTru2021concordancxehomom} proves that $\gamma_0$ is obtained from a horizontally and vertically simplified basis. As such, right arcs in $\gamma_0$ are in direct correspondence to horizontal arrows, i.e., for any arc of length $j$, there is a horizontal arrow of length $j$ in $\CFK(K)$ (cf. \cite[Remark~1.3]{DaiHomStoTru2021concordancxehomom}).
\end{proof}

\begin{corollary}
    $\Ord(K) \geq \max\{j \in \N \mid \varphi_j(K) \neq 0\}$. 
\end{corollary}
\begin{proof}
    If $\varphi_j(K) \neq 0$, there is a horizontal arrow of length $j$, thus a right arc in $\gamma_0$ of length $j$, and thus $\Ord(K) \geq j$.
\end{proof}

\begin{rem}
    This is reproves~\cite[Proposition~1.15]{DaiHomStoTru2021concordancxehomom} using immersed curves.
\end{rem}

\begin{question}
    $\Ord(K) = \max\{j \in \N \mid \varphi_j(K) \neq 0\}$
\end{question}
Probably wrong. Depends on the unidirectionality of arcs of maximal length.
\begin{question}
    Let $K$ be a knot in $S^3$ such that $\Ord(K) = \max\{j \in \N \mid \varphi_j(K) \neq 0\}$. Is then for all $(p,q)$-cables: 
\[
\Ord(K)+1 = p(\Ord(K) + 1)?
\]
\end{question}
Probably not. The existence of an arc of proper length does not imply its type.
\fi

We shortly remark the proof of Proposition~\ref{thm: explicit bounds}:

\begin{proof}
Let $\Ord(K)$ be realized by the maximal length of an arc in the immersed curve. Proposition~3.2~\cite{hom2022unknottingnumbercabling} states that for any nontrivial knot $K$, 
\[
\Ord(K_{p,q}) - 1 \geq \max\{p(\Ord(K) - 1), p - 1\}.
\]
In particular, $\Ord(K_{p,q}) \geq p(\Ord(K) - 1)$.

For the upper bound, note that the maximal length any arc can achieve in $\gamma_{p,q}$ is $pn + p - 1$ by Proposition~3.1~\cite{hom2022unknottingnumbercabling}. Since by Proposition~\ref{rem: main advancement} no longer arc can appear, $\Ord(K_{p,q}) \leq \Ord(K_{p,q}) + 1 \leq p(\Ord(K) + 1)$.  
\end{proof}

\section{Application}

We prove Theorem~\ref{thm: L-space Alex bridge} using immersed curves constructed from the Alexander polynomial. Let $\alpha_0 \geq \alpha_1 \geq \dots \geq \alpha_{2n}$ be the decreasing sequence of exponents in the Alexander polynomial $\Delta_{K_{p,q}}(t)$ of an L-space knot $K$. 
%The immersed curve $\boldsymbol{\gamma}$ is a single curve with trivial local system given by generators at heights $\alpha_i$, connected alternately on the left and the right (cf.~\cite[Proposition~47]{HanRasWat2022heegaardfortorusbdd}, ~\cite[Section~7.5]{HanRasWat2024borderedviaimmersed}, ~\cite[Remark 16]{HanRasWat2022heegaardfortorusbdd}).

\begin{lemma}(Immersed curves for L-space knots)
\label{lem:Alex to gamma}
For an L-space knot $K$ with symmetrized Alexander polynomial $\Delta_K(t)$ with exponents $\alpha_{0} > \dots > \alpha_{2n}$, the immersed multicurve $\widetilde{\gamma}$ is a single curve constructed as follows. 
\begin{enumerate}
    \item Place a horizontal segment $[-1/4, 1/4] \times \{t\}$ at hight $t = \alpha_i$.
    \item Connect the endpoints corresponding to $\alpha_{2i+1}$ and $\alpha_{2i+2}$ on the left.
    \item Connect the endpoints corresponding to $\alpha_{2i+1}$ and $\alpha_{2i}$ on the right.
    \item Connect the unattached endpoints to $(-1/2, 0)$ and $(1/2, 0)$, respectively.
\end{enumerate}
\end{lemma}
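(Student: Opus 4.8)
The plan is to apply Proposition~47 of~\cite{HanRasWat2022heegaardfortorusbdd}, which produces the immersed curve invariant $\widetilde\gamma$ directly from a reduced, vertically and horizontally simplified basis of $\CFK^-(K)$, and then to recognize that for an L-space knot the explicit basis from Theorem~\ref{thm:l_space knot complex} puts that recipe into the particularly simple form stated in the lemma. First I would record that the basis $\{x_0, \dots, x_{2n}\}$ of Theorem~\ref{thm:l_space knot complex} is filtered (its image in $\gCFK^-(K)$ is a basis), reduced (each differential $\partial^- x_{2i+1} = x_{2i+2} + U^{d_{2i+1}} x_{2i}$ drops either $\gr_A$ in the first term or the $U$-power is strictly positive in the second), and simultaneously vertically and horizontally simplified: vertically, because modulo $U$ we have $\partial^- x_{2i+1} \equiv x_{2i+2}$ and $\partial^- x_{2i} = 0$; horizontally, because the ``horizontal'' arrow of $\partial^- x_{2i+1}$ is $U^{d_{2i+1}} x_{2i}$ with $\gr_A(x_{2i+1}) = \gr_A(U^{d_{2i+1}} x_{2i})$ (using $d_{2i+1} = \alpha_{2i} - \alpha_{2i+1}$), while $x_{2i+2}$ is the lower-filtration term. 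Thus the hypotheses of Proposition~47 are met with no further simplification, and the curve is connected because the ``zig-zag'' $x_0 \to x_1 \to x_2 \to \cdots \to x_{2n}$ links all generators into one chain.

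Next I would unwind what Proposition~47 prescribes. Each generator $x_k$ contributes a short horizontal segment placed at height equal to its Alexander grading $\alpha_k$ (this is the $[-1/4,1/4]\times\{\alpha_k\}$ of step~(1)); a vertical differential arrow $\partial \xi_i \equiv \xi_{i+1} \bmod U$ joins the two segments on the \emph{left} side of the strip, and a horizontal differential arrow $\partial \eta_i = U^m \eta_{i+1} + \dots$ joins them on the \emph{right} side. For our basis the vertical arrows are exactly $x_{2i+1} \rightsquigarrow x_{2i+2}$, giving the left-hand connections of step~(2), and the horizontal arrows are exactly $x_{2i+1} \rightsquigarrow x_{2i}$, giving the right-hand connections of step~(3). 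The two endpoints that are never hit by any differential --- namely the segment at $x_0$ (nothing maps vertically into it and $x_0$ has no outgoing differential) on one side and the segment at $x_{2n}$ on the other --- are the ``unattached'' ends, and the general recipe closes the curve up through the distinguished point on the axis, which is step~(4). I would also remark that the symmetry $\alpha_{2n-k} = -\alpha_k$ makes the resulting curve symmetric under $t \mapsto -t$, consistent with the shape of $\widehat{\HF}$ of a knot complement, and that after the standard homotopy moving intersections with $\{0\}\times\R$ to integer heights the picture is exactly Figure~\ref{fig:T23 immersed curve}(a).

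The one genuinely delicate point --- and the step I expect to be the main obstacle --- is verifying that the \emph{same} basis serves as both a vertically simplified and a horizontally simplified basis, so that Proposition~47 applies on the nose rather than after an intermediate change of basis; in particular one must check that the heights at which the left- and right-hand arcs attach are consistent, i.e. that the horizontal arc from $x_{2i+1}$ to $x_{2i}$ really does sit at the two heights $\alpha_{2i+1}$ and $\alpha_{2i}$ dictated by the segments, which is where the identity $d_{2i+1} = \alpha_{2i} - \alpha_{2i+1}$ (and its mirror $d_{2n-i} = d_{1+i}$ from Remark~\ref{rem: basis for CFK^-}) is used. Once this bookkeeping is in place, the lemma follows by directly reading off the output of Proposition~47; the remaining content is purely the translation between the algebraic ``box/zig-zag'' description of $\CFK^-(K)$ and the combinatorial drawing in the strip $\overline T$, which I would present with the aid of the figure rather than in prose.
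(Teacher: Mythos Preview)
Your proposal is correct and follows essentially the same route as the paper: invoke the explicit basis of Theorem~\ref{thm:l_space knot complex}, check that it is filtered, reduced, and simultaneously vertically and horizontally simplified, and then read off the curve from Proposition~47 of~\cite{HanRasWat2022heegaardfortorusbdd}, identifying the vertical arrows $x_{2i+1}\to x_{2i+2}$ with the left arcs and the horizontal arrows $x_{2i+1}\to x_{2i}$ with the right arcs. If anything, your write-up is slightly more explicit than the paper's about why the same basis works in both roles and about which endpoints ($x_0$ on the left, $x_{2n}$ on the right) remain unattached.
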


\begin{proof}
    Following~\cite[Theorem~1.2]{OS2005lensespacesurgeries}, the knot Floer complex $\CFK(K)$ of an L-space knot $K$ is given from the Alexander polynomial $\Delta_K(t)$ with exponents $\alpha_0 \geq \dots \geq \alpha_{2n}$ by generators $\{x_i\}_{i = 0, \dots,2n}$ such that
    \begin{equation}
        \partial x_{2n} = 0 \quad\text{and}\quad \partial x_{2i + 1} = V^{d_{2i + 2}}x_{2i+2} + U^{d_{2i+1}} x_{2i},
    \end{equation}
    where $d_i = \alpha_{i-1} - \alpha_i$. Thus, $\partial x_{2i} = 0$ and $\partial x_{2i+1} = V^{d_{2i+2}} x_{2i+2}\text{ mod }U$ and therefore the basis is \textit{vertically simplified}. Also, $\partial x_{2i} = 0$ and $\partial x_{2i+1} = U^{d_{2i+1}} x_{2i} \text{ mod }V$. Thus, it is \textit{horizontally simplified} and Proposition~47 in~\cite{HanRasWat2022heegaardfortorusbdd} applies. 
\end{proof}

\begin{proof}[Proof of Theorem~\ref{thm: L-space Alex bridge}]
    Let $K$ be an L-space knot with braid index $b(K) = b$ and Seifert genus $g(K) = g$ such that 
    \[\Delta_K(t) = 1 + t - t^b - \cdots.\]
    By Equation~\eqref{eq: Ord+1 < br} and Equation~\eqref{eq: br < braid} we have
    \[
    \Ord(K) + 1 \leq \br(K) \leq b(K).
    \]
    Using Equation~\eqref{eq: ord from alex}, $\Ord(K) \geq b(K) - 1$ and thus
    \[
    \Ord(K)+1 = \br(K) = b(K).
    \]
    By Theorem~\ref{thm: Ord_v is multiplicative} and~\cite{williams1992multiplicative}, for all $(p,q)$-cables of $K$
    \[
    p\cdot (\Ord(K) + 1) = \Ord(K_{p,q}) + 1 = \br(K_{p,q}) = b(K_{p,q}) = p\cdot b(K).
    \]
    Denote by $\gamma$ the immersed curve of $K$, by $\gamma_{p,q}$ the immersed curve of $K_{p,q}$.
    The arc $a$ in $\gamma$ corresponding to $\alpha_{2g-1} - \alpha_{2g-2} = b$ is noninitial arc of maximal length of type $\eta^{+-}_{b-1}$. Thus, the arc yields an arc of type $\eta^{+-}_{p(b-1) + p - 1}$ in the rightmost copy in $\gamma_{p,q}$, which is maximal again. Since $K_{p,q}$ is an L-space knot, its curve is again monotone. Thus, two copies of $\gamma$ do not overlap during the procedure in Construction~\ref{prop: cabling process} and the arc in $\gamma_{p,q}$ corresponding to the rightmost copy of $a$ is the last nonessential arc in $\gamma_{p,q}$. Thus, its length is equal to the difference of the second-to-last and last exponent in $\Delta_{K_{p,q}}(t)$. Using~\cite{Krcatovich2018restriction}, we get
    \begin{equation*}
        \Delta_{K_{p,q}(t)} = 1 - t + t^{p(b-1) + p} - \cdots.
    \end{equation*}
    Thus, $\Ord(K_{p,q}) + 1 = p(b-1) + p = bp$, proving the result.
\end{proof}

\let\MRhref\undefined
\bibliographystyle{hamsalpha}
\bibliography{sources.bib}

\end{document}